\numberwithin{equation}{section}
\newtheorem{theorem}{Theorem}
\numberwithin{theorem}{section}
\newtheorem{lemma}{Lemma}
\numberwithin{lemma}{section}
\newtheorem{cor}{Corollary}
\numberwithin{cor}{section}
\numberwithin{remark}{section}
\numberwithin{figure}{section}
\DeclareMathOperator{\maj}{maj}
\DeclareMathOperator{\inv}{inv}
\DeclareMathOperator{\Swap}{Swap}
\title{A generalized Major index statistic on tableaux}
\author{James Haglund and Emily Sergel}
\date{}
\begin{document}

\maketitle

\begin{abstract}

We extend the family of statistics $\maj_d$, introduced for permutations by Kadell \cite{kadell}, to standard Young tableaux. At one extreme, we have the traditional Major index statistic $\maj_1$ for tableaux. At the other end, whenever $N \geq n-1$, then $\maj_{N} = \inv$, the inversion statistic introduced by \cite{HSinv}. This is answers a question of Assaf \cite{assaf}, who defined $\maj_2$ and $\maj_3$ for tableaux.

\end{abstract}


\section{Permutation statistics} \label{sec:perm}

Let $\sigma = \sigma_1 \sigma_2 \dots \sigma_n$ denote the permutation of $\{1,2,\dots,n\}$ sending $i$ to $\sigma_i$. The set of inversions of a permutation $\sigma$ is
$$
Inv(\sigma) = \left\{ (i,j) : i<j \hbox{ and } \sigma_{i} > \sigma_{j} \right\}.
$$
Let $\inv(\sigma) = | Inv(\sigma) |$. In other words,
$$
\inv(\sigma) = \sum_{i<j} 1 \cdot \chi\left( (i,j) \in Inv(\sigma) \right)
$$
where $\chi$ of a statement $A$ is 1 when $A$ is true and 0 when $A$ is false.
MacMahon \cite{major} introduced another statistic $\maj$ which has the same distribution as $\inv$ on permutations. That is,
$$
\sum_{\sigma \in S_n} q^{\inv(\sigma)} = \sum_{\sigma \in S_n} q^{\maj(\sigma)}.
$$
This statistic is also based on inversions, but assigns different weights to different inversions. In particular,
\begin{align*}
\maj(\sigma) &= \sum_{i=1}^{n-1} i \cdot \chi\left( (i,i+1) \in Inv(\sigma)\right)\\
&= \sum_{i<j} i \cdot \chi\left( j=i+1\right) \cdot \chi\left( (i,j) \in Inv(\sigma)\right)
\end{align*}
Many years after MacMahon introduced the $\maj$ statistic, Foata \cite{foata} gave an explicit map $f$ with $\maj(f(\sigma)) = \inv(\sigma)$ for all permutations $\sigma$.

Kadell \cite{kadell} extends these two statistics naturally using an upper triangular matrix of weights $W = || w_{i,j} ||_{1 \leq i,j \leq n}$. For each such $n \times n$ matrix and $\sigma \in S_n$, he defines
$$
\inv_W(\sigma) = \sum_{i<j} w_{i,j} \cdot \chi\left( (i,j) \in Inv(\sigma) \right).
$$
The statistics of interest here correspond to the matrices $W^{(d)} = || w^{(d)}_{i,j} ||$ with $d>0$ where
$$
w^{(d)}_{i,j} = \begin{cases}
0 & \hbox{if }i \geq j \hbox{ or }j-i>d,\\
1 & \hbox{if }j >i \hbox{ and }j-i<d,\\
i & \hbox{if }j-i=d.\\
\end{cases}
$$
This statistic $\inv_{W^{(d)}}$ was called $\inv_d$ by Kadell, and later reintroduced by Assaf \cite{assaf} as $\maj_d$. Both Kadell and Assaf study these statistics because they are closely related to LLT polynomials and Macdonald polynomials (see \cite{assaf} for more details). Here, we follow Assaf's notation. That is, we set
$$
\maj_d(\sigma) = \inv_{W^{(d)}}(\sigma) = \sum_{i<j<i+d} 1 \cdot \chi\left( (i,j) \in Inv(\sigma)\right) \, + \sum_{j=i+d} i \cdot \chi\left( (i,j) \in Inv(\sigma)\right).
$$

\begin{theorem}[Kadell \cite{kadell}] \label{thm:permequidist}
For any $n,d>0$,
$$
\sum_{\sigma \in S_n} q^{\inv(\sigma)} = \sum_{\sigma \in S_n} q^{\maj_d(\sigma)}
$$
\end{theorem}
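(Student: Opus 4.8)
The plan is to prove the equivalent assertion $\sum_{\sigma\in S_n}q^{\maj_d(\sigma)}=[n]_q!$, where $[n]_q!=\prod_{i=1}^n(1+q+\cdots+q^{i-1})$, by induction on $n$; since the Lehmer code shows elementarily that $\sum_{\sigma\in S_n}q^{\inv(\sigma)}=[n]_q!$, this suffices. For the inductive step I would build, for each $\sigma'\in S_{n-1}$, a list of $n$ permutations of $\{1,\dots,n\}$ indexed by a ``slot'' $t\in\{0,1,\dots,n-1\}$, the $t$-th having $\maj_d$ equal to $\maj_d(\sigma')+t$, in such a way that every $\sigma\in S_n$ is produced exactly once; summing over $\sigma'$ then gives $\sum_{\sigma\in S_n}q^{\maj_d(\sigma)}=[n]_q\cdot\sum_{\sigma'\in S_{n-1}}q^{\maj_d(\sigma')}=[n]_q!$. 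Equivalently, iterating this over the values $1,2,\dots,n$ produces a bijection $\Phi\colon S_n\to\prod_{m=1}^n\{0,\dots,m-1\}$ with $\maj_d(\sigma)=\sum_m\Phi(\sigma)_m$.

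Two facts drive the construction. First, appending the new largest value at the \emph{end} of $\sigma'$ leaves $\maj_d$ unchanged, since a terminal maximum lies in no inversion; this supplies the slot $t=0$. Second --- and here $d\geq 2$ departs from the classical case $d=1$ --- the naive Carlitz recipe of inserting the new maximum into each of the remaining gaps does \emph{not} realize the other slots: inserting the maximum pushes every later position up by one, which can convert an inversion at position-distance exactly $d$ (weight $i$) into one at distance $d+1$ (weight $0$), so the $n$ gap-insertions need not produce the interval $\{\maj_d(\sigma'),\dots,\maj_d(\sigma')+n-1\}$. The remedy I would pursue is to follow the insertion by a canonical sequence of local $\Swap$ moves, each rearranging only entries inside a window of $d+1$ consecutive positions near the insertion site; every such move changes $\maj_d$ by an amount read off from the weight matrix $W^{(d)}$, and one schedules them so that the net change equals the target $t$.

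Carrying this out requires three things, in order: (1) a purely local computation of how a single $\Swap$ move on $d+1$ consecutive entries alters $\maj_d$, obtained by tallying the weights in $W^{(d)}$ of the finitely many pairs the move disturbs; (2) the combinatorial heart --- a proof that a suitably ordered composition of such moves, applied after the end-insertion, realizes each of the increments $0,1,\dots,n-1$ once and only once; this is the generalization of Carlitz's counting lemma, where for $d=1$ the descent set of $\sigma'$ already organizes the bookkeeping, but for $d\geq 2$ one must in addition track how the window moves ``repair'' the inversions that got stretched past distance $d$; and (3) a check that the whole procedure is reversible, so that the slot $t$ and the reduced permutation $\sigma'$ can be recovered from $\sigma$, making $\Phi$ a bijection. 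Granting (1)--(3), the induction closes with the identity $\sum_{\sigma\in S_n}q^{\maj_d(\sigma)}=[n]_q\,[n-1]_q!=[n]_q!$.

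The main obstacle is step (2). It is exactly the non--shift-invariance of the weight-$i$ contribution of distance-$d$ inversions --- the single feature separating $\maj_d$ from $\inv$ --- that defeats the naive insertion, so the correcting $\Swap$ moves must be engineered with that interaction in mind and then shown to sweep out all of $\{0,\dots,n-1\}$ without collision. A secondary constraint on the design is that, because the point of the paper is to carry $\maj_d$ over to standard Young tableaux, the notions of ``window'' and of the $\Swap$ move ought to be arranged so that they also make sense on tableaux, which curtails the freedom available in choosing the local moves.
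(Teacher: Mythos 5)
Your outline correctly isolates the difficulty---for $d\geq 2$ the weight of an inversion depends on the positional distance $j-i$ of its two entries, so inserting $n$ into a gap of $\sigma'$ shifts later positions and perturbs the weights of pre-existing inversions straddling the insertion point, which is exactly why the naive Carlitz insertion lemma fails---but it does not overcome that difficulty. Everything rests on your step (2): the existence of a scheduled family of local window moves which, combined with the $n$ insertions, realizes each increment $0,1,\dots,n-1$ exactly once. As written this is not a construction accompanied by a proof; it is a restatement of the theorem in the form ``there exists a bijection $S_n\to S_{n-1}\times\{0,\dots,n-1\}$ shifting $\maj_d$ by the slot label.'' You give no definition of the corrective moves, no computation of how each one changes $\maj_d$ (step (1) is likewise only announced), no scheduling rule, and no argument that the $n$ outcomes are pairwise distinct and sweep out the whole interval; you yourself flag (2) as ``the main obstacle.'' Consequently the proposal is a plan for a proof rather than a proof, and the identity $\sum_{\sigma\in S_n}q^{\maj_d(\sigma)}=[n]_q!$ is not established.

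For what it is worth, the paper itself does not reprove this statement; it is quoted from Kadell, and the route sketched there is genuinely different from yours. Kadell works with the full weighted statistic $\inv_W$ and exhibits elementary transformations of the weight matrix $W$ that provably preserve the distribution of $\inv_W$ on $S_n$; composing these transformations carries the all-ones matrix (giving $\inv$) to $W^{(d)}$ (giving $\maj_d$), and the composite yields an explicit Foata-style bijection that repeatedly splits a permutation into blocks and cycles the top $d$ values. That bijective, matrix-transformation approach is also what the rest of the paper generalizes to tableaux via the operators $\Psi_k^{(d)}$, whereas an insertion-based product formula of the kind you propose would not transfer to $SYT(\lambda)$, where the generating function is not $[n]_q!$. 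If you want to salvage your approach, the honest next step is to either construct and verify the window moves explicitly or to adopt Kadell's distribution-preserving transformations as the engine for step (2).
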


Kadell's proof of Theorem~\ref{thm:permequidist} relies on a broad family of transformations on the weight matrix $W$ which preserve distributions. Then these transformations are combined to obtain maps exchanging $\inv$ for $\maj_d$ which interpolate between the identity map ($d=1$) and Foata's map ($d=n-1$). For a particular $d$, Kadell's map iteratively breaks the permutation into blocks and then cycles numbers the top $d$ numbers. In the next section, we generalize this map to tableaux and, from the map, derive our $\maj_d$ statistic.


\section{Tableau Statistics} \label{sec:tab}

Let $\lambda = (\lambda_1 \geq \lambda_2 \geq \dots \geq \lambda_k > 0)$ with $\lambda_1+\lambda_2+ \dots +\lambda_k=n$. We say that $\lambda$ is a partition of $n$, written $\lambda \vdash n$. Partitions appear in the theory of symmetric functions, e.g., as indices for the space of homogeneous symmetric functions. For an introduction to symmetric function theory, see Stanley \cite{EC2}. Associated to each partition $\lambda$ is a diagram called the Young diagram or Ferrers diagram of $\lambda$. Following the French convention, we define the Young diagram of $\lambda$ to be a collection of boxes which is left aligned and whose $i$-th row from the bottom contains $\lambda_i$ boxes. For example, see Figure~\ref{fig:Young}. We will often abuse notation by identifying a partition with its diagram.

\begin{figure}[H]
\begin{center}
\includegraphics[width=0.9in]{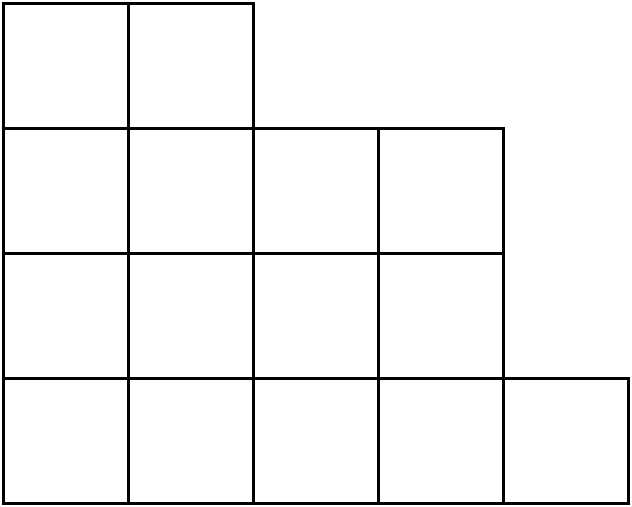}
\end{center}
\vspace{-10pt}
\caption{The Young diagram of the partition $(5,4,4,2)$.}
\label{fig:Young}
\end{figure}

A filling of such a diagram is a map between the cells of the diagram and some set of labels, usually positive integers. If $\lambda \vdash n$, a standard Young tableaux of shape $\lambda$ is a filling of the diagram of $\lambda$ with the labels $\{1,2,\dots,n\}$ so that each label is used exactly once and labels increase as you move up a column or to the right in a row. For example, there are five standard Young tableaux of shape $(3,2)$. They are pictured in Figure~\ref{fig:sty32}. The collection of standard Young tableaux of any shape $\lambda$ is denoted $STY(\lambda)$.

\begin{figure}[H]
\begin{center}
\includegraphics[width=3.5in]{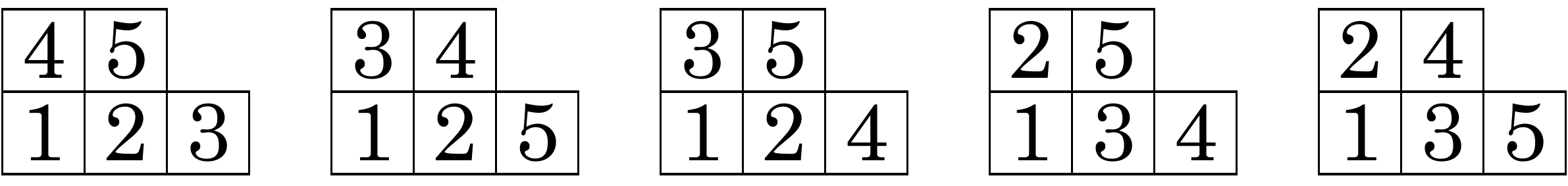}
\end{center}
\vspace{-10pt}
\caption{All standard Young tableaux of shape $(3,2)$.}
\label{fig:sty32}
\end{figure}

There is a very natural analog of the Major index statistic for tableaux. For any standard young tableaux $T$, let
$$
\maj(T) = \sum_{i=1}^{n-1} i \cdot \chi\left( \hbox{$i+1$ appears in a higher row of $T$ than $i$} \right).
$$
This statistic is compatible with the $\maj$ statistic on permutations and the Robinson-Schensted-Knuth algorithm (see Stanley \cite{EC2} for the definition and relevance of this algorithm). Haglund and Stevens \cite{HSinv} define an inversion statistic on tableaux which is equidistributed with $\maj$. Our goal here is to define a $\maj_d$-type statistic which is also equidistributed with $\maj$ and $\inv$ on standard Young tableaux. Standard Young tableaux and their statistics are also very important in symmetric function theory, particularly in the theory of Macdonald polynomials and LLT polynomials. See Macdonald \cite{macbook} and Assaf \cite{assaf} for more details.

Haglund and Stevens' point of departure is to suppose that a pair of labels $i<j$ in a standard Young tableaux $T$ should always make an inversion when $i$ is in a strictly lower row and weakly further right than $j$, but never when $i$ is in a weakly higher row. 

\begin{figure}[H]
\begin{center}
\includegraphics[width=0.9in]{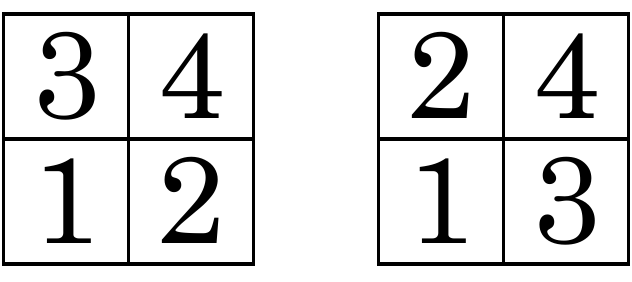}
\end{center}
\vspace{-10pt}
\caption{All standard Young tableaux of shape $(2,2)$.}
\label{fig:sty22}
\end{figure}

For example, consider the two standard Young tableaux of shape $(2,2)$ in Figure~\ref{fig:sty22}. The tableau $T^{(2,2)}_1$ on the left has $\maj=2$ and the tableau $T^{(2,2)}_2$ on the right has $\maj=4$. So we would like to define the set of inversions $Inv(T)$ of a tableau $T$ so these two tableaux have $Inv$ of sizes 2 and 4, in some order. Based on the heuristic above, we would like to define $Inv$ so that $(1,3), (2,3), (2,4) \in Inv(T^{(2,2)}_1)$ and $(1,2), (3,4) \in Inv(T^{(2,2)}_2)$. So this can only be achieved if $(1,4) \in Inv(T^{(2,2)}_1)$ and $(1,4) \not\in Inv(T^{(2,2)}_2)$.

To resolve this problem, Haglund and Stevens construct a path starting at each cell of a tableau $T$ and define $Inv(T)$ to be the set of pairs $(i,j)$ with $i<j$ and $i$ lying below the path which starts at $j$'s cell in $T$. For $\maj_d$, we will also define such paths. But for us, the path will depend on $d$, as well as on the tableaux. That is, a particular tableau does not have a fixed set of inversions which are being weighted differently by different statistics. For Haglund and Stevens, these paths determine the inversion pairs which all have weight 1. Ideally, the weight of a particular inversion $(i,j)$ in a $\maj_d$-type statistic would depend only on the values of $i$, $j$, and $d$. But for our $\maj_d$ statistic, the weight of a particular inversion will be more involved. These additional complexities seem to be necessary.


\subsection{The inversion statistic of \cite{HSinv}} \label{sec:tabinv}

Let $\lambda$ be a partition of $n$ and let $T$ be a standard Young tableaux of shape $\lambda$. Let $1\leq k \leq n$ and define the path $\pi(T,k)$ as follows: Start at the lower left-hand corner of the cell of $T$ containing $k$. At each step, compare the entries directly left of and below the current location. If both exist, move one step in the direction of the larger entry. If one of these entries doesn't exist, move toward the one that does. If both don't exist, stop. For example, consider the first four tableaux in Figure~\ref{fig:HSinvex}. For each of these tableaux $T$, the path $\pi(T,k)$ is drawn in a thick line for $k=9,8,7,5$, respectively.

\begin{figure}[H]
\begin{center}
\includegraphics[width=3.5in]{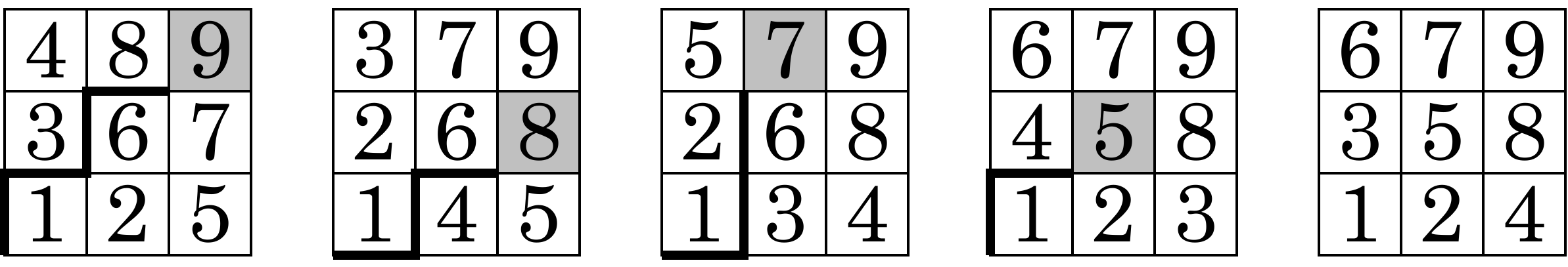}
\end{center}
\vspace{-10pt}
\caption{The computation of $\inv$ for a standard Young tableaux of shape $(3,3,3)$.}
\label{fig:HSinvex}
\end{figure}

Given a tableau $T$ and a path $\pi$ starting at the cell labeled $k$, we break the numbers of $1,2,\dots,k-1$ into disjoint blocks as follows. Each block consists of consecutive numbers $a,a+1,\dots,a+m$ so that $a$ is on the same side of $\pi$ as 1 and all of the numbers $a+1,\dots,a+m$ are on the opposite side. These blocks are maximal in the sense that either $a+m+1$ is on the same side of $\pi$ as 1, or $a+m+1=k$. In the first tableau of Figure~\ref{fig:HSinvex}, the blocks for the given path are $\{1\}, \{2,3,4\}, \{5\}, \{6\}, \{7,8\}$. For the second tableau, they are $\{1\}, \{2\}, \{3,4,5\}, \{6\}, \{7\}$.

Notice that for any a tableau $T$, in the blocks about the path $\pi(T,k)$, the smallest element will never be adjacent to any of the bigger elements from the same block. This is because of the nature of the path separating the first element from the others. For a detailed explanation, see \cite{HSinv}. Hence in each block we may rotate the elements - that is we may write $a$ into $a+1$'s cell, $a+1$ into $a+2$'s cell, \dots, and $a+m$ into $a$'s cell - without violating the row- and column-increasing conditions for Young tableaux. Let $\Psi_k(T)$ be the standard Young tableau obtained in this way. For example, if the first tableau of Figure~\ref{fig:HSinvex} is called $S$, then the others are $\Psi_9(S)$, $\Psi_8 \circ \Psi_9(S)$, $\Psi_7 \circ \Psi_8 \circ \Psi_9(S)$, and $\Psi_5 \circ \Psi_7 \circ \Psi_8 \circ \Psi_9(S)$.

Haglund and Stevens show that the maps $\Psi_k$ are bijective and give a detailed description of their inverse. They also show that $\maj(\Psi_1 \circ \Psi_2 \circ \cdots \Psi_n(T))$ equals $\inv(T) = |Inv(T)|$ when $Inv(T)$ is defined as follows. For each cell, there will be exactly one $j$ for which $\Psi_{j+1} \circ \cdots \circ \Psi_n(T)$ contains $j$. Associate the path $\pi(\Psi_{j+1} \circ \cdots \circ \Psi_n(T),j)$ to this cell. Then a pair $(i,j)$ with $i<j$ forms an inversion of $T$ if and only if $i$ is below the path associated to the cell containing $j$ in $T$.

We consider again the first tableau $S$ of Figure~\ref{fig:HSinvex} and give a detailed account of the computation of $\inv(S)$. The number $9$ appears in the top right cell, which attacks five other cells. Hence we have $(1,9), (2,9), (5,9), (6,9), (7,9) \in Inv(S)$. Then the number $8$ appears in the middle right cell of $\Psi_9(S)$ and attacks two other cells. Back in $S$, the middle right cell contains $7$ and the cells it attacks contain $2$ and $5$, so we have $(2,7), (5,7) \in Inv(S)$. Next $7$ appears in the top middle cell of $\Psi_8 \circ \Psi_9(S)$ and attacks 4 cells. Looking back in $S$, this gives $(2,8), (5,8), (6,8), (7,8) \in Inv(S)$.

Now the $6$ appears in the top left cell of $\Psi_7 \circ \Psi_8 \circ \Psi_9(S)$. Since it is already on the left boundary, the path starting there goes straight down. In $S$, the top left cell contains a $4$ and some of the cells falling under the path are larger than $4$ and hence do not contribute inversions. From this cell we only gain three inversions - $(1,4), (2,4), (3,4) \in Inv(T)$. Back in $\Psi_7 \circ \Psi_8 \circ \Psi_9(S)$, every number less than $6$ is below the path starting at $6$. This means every block will have a single element and $\Psi_6 \circ \Psi_7 \circ \Psi_8 \circ \Psi_9(S) = \Psi_7 \circ \Psi_8 \circ \Psi_9(S)$. This happens often - whenever $k$ is in a cell on the left or bottom border of $T$, $\Psi_k(T)=T$.

Continuing in this fashion, we find that the remaining inversions of $S$ are $(1,6)$, $(2,6)$, $(5,6)$, $(1,3)$, and $(2,3)$. Hence $\inv(S) = |Inv(S)| = 19$. And indeed $\Psi_1 \circ \Psi_2 \circ \cdots \Psi_9(S)$, which is the rightmost tableau in Figure~\ref{fig:HSinvex}, has $\maj = 19$.


\subsection{The generalized Major index} \label{sec:tabmaj}

Let $T \in SYT(\lambda)$ for some $\lambda \vdash n$ and let $a+1 < k \leq n$. Then define $\Swap_{a,k}(T)$ to be the tableau obtained by switching $a$ and $a+1$ if they're separated by $\pi(T,k)$ and otherwise doing nothing ($\Swap_{a,k}(T)=T$). Note that $\Psi_k(T) = \Swap_{1,k} \circ \dots \circ \Swap_{k-2,k}(T)$.

\begin{lemma} \label{lemma:swap}
For $T \in SYT(\lambda)$ and $a+1 < k \leq n$, $\Swap_{a,k}(T) \in SYT(\lambda)$.
\end{lemma}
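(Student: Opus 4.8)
The plan is to reduce the lemma to a single geometric fact about the path: for any $T\in SYT(\lambda)$ and any $a+1<k\le n$, the path $\pi(T,k)$ never separates two edge-adjacent cells whose entries are consecutive integers $a$ and $a+1$. Granting this, the lemma follows at once. If $\Swap_{a,k}(T)=T$ there is nothing to prove, so suppose instead that $\Swap_{a,k}(T)$ is obtained from $T$ by interchanging $a$ and $a+1$; by definition of $\Swap_{a,k}$ this means $a$ and $a+1$ lie on opposite sides of $\pi(T,k)$. Because $T$ is standard and $a,a+1$ are consecutive, no cell can lie strictly between them in a common row or a common column; hence the cell of $a$ and the cell of $a+1$ either (i) share neither a row nor a column, or (ii) are edge-adjacent. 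Case (ii) is excluded by the geometric fact, so we are in case (i). In that case the cell of $a$ and the cell of $a+1$ are not orthogonally adjacent, so every cell orthogonally adjacent to either one has entry outside $\{a,a+1\}$; combined with the row- and column-increase of $T$, this shows that putting $a+1$ into $a$'s cell and $a$ into $a+1$'s cell violates no adjacency inequality. Hence $\Swap_{a,k}(T)\in SYT(\lambda)$.

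It remains to establish the geometric fact. Assume $a$ and $a+1$ occupy edge-adjacent cells. Since the rows and columns of $T$ increase, either $a$ is the immediate left neighbour of $a+1$ in a row, or $a$ is the immediate lower neighbour of $a+1$ in a column; transposing the tableau interchanges these two configurations and also interchanges the West and South steps of $\pi(T,k)$, so it is enough to treat the first. Write $a$ in cell $(r,c)$ and $a+1$ in cell $(r,c+1)$, and suppose, for contradiction, that $\pi(T,k)$ separates them. Every step of $\pi(T,k)$ is a West or a South step, so the path must cross the vertical edge $e$ shared by these two cells by a single South step; let $v$ be the upper endpoint of $e$. By the greedy rule defining $\pi(T,k)$, at $v$ the path steps South — that is, toward $a+1$'s cell — only if the cell directly above $a$'s cell, namely cell $(r+1,c)$, is either absent or carries an entry smaller than $a+1$; but that entry, when present, exceeds $a$ and differs from $a+1$, hence is at least $a+2$. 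Therefore cell $(r+1,c)$ is absent, which pins down the shape of $\lambda$ near $v$: in particular $\lambda_{r+1}<c$, while $\lambda_r\ge c+1$ because $a+1$'s cell exists.

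The last step — and the only genuinely delicate one — is to deduce that, in this constrained shape, $\pi(T,k)$ cannot even reach $v$, which contradicts the assumption that it crosses $e$. Since the path moves only West and South, it can arrive at $v$ only from the lattice point directly east of $v$ or the one directly north of $v$, unless $v$ is itself the starting corner of the path. I would finish by ruling out all three possibilities using the constrained shape: at the point east of $v$ the cell that would play the role of \emph{left} in the greedy rule is absent, so from there the path moves South or halts rather than stepping West into $v$; at the point north of $v$ both relevant cells (\emph{left} and \emph{below}) are absent, so from there the path halts rather than stepping South into $v$; and $v$ is not the lower-left corner of any cell of $\lambda$, so no path starts at $v$. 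This contradiction proves the geometric fact, and hence Lemma~\ref{lemma:swap}. The checks in this final step are brief but require keeping careful track of which cells of the diagram are present, and the column configuration is handled by applying the same argument to the transposed tableau.
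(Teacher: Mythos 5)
Your proof is correct and takes essentially the same approach as the paper's: both reduce the lemma to the fact that $\pi(T,k)$ never separates edge-adjacent cells labelled $a$ and $a+1$ (non-adjacent consecutive labels being freely swappable), and both establish this by examining the greedy rule at the shared corner of the two cells. Your version is in fact slightly more careful than the paper's on one point: when the cell above $a$ is absent, the greedy rule as stated would send the path toward $a+1$ rather than West, and you close this case by showing the path cannot reach that corner at all, whereas the paper's proof glosses over it.
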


\begin{proof}
Simply note that if the labels $a$ and $a+1$ are adjacent in $T$, then the path $\pi(T,k)$ will not separate them. In particular, if $a$ is directly left of $a+1$, then either there is no cell above $a$, or it contains a label larger than both $a$ and $a+1$. Hence if the path $\pi(T,k)$ ever reaches the upper right corner of $a$'s cell, it will more left, leaving both $a$ and $a+1$ below the path. Similarly, if $a+1$ is above $a$, then the cell to $a$'s right contains a larger label (if it exists) so the path $\pi(T,k)$ will not separate them. If $a$ and $a+1$ are not adjacent, then their labels can be switched without violating the row or column increasing conditions for standard Young tableaux.
\end{proof}

Suppose $k \geq d$. Let $\Psi_k^{(d)}(T) = \Swap_{\max(k-d,1),k} \circ \dots \circ \Swap_{k-2,k}$. In other words, $\Psi_k^{(d)}(T)$ is the tableau obtained from standard Young tableau $T$ by cycling the numbers $\max(k-d,1),\dots,k-1$ around the path $\pi(T,k)$ in maximal blocks $\{a,a+1,\dots,a+m\}$ so that $a$ is on the same side of $\pi(T,k)$ as $\max(k-d,1)$ and $a+1,\dots,a+m$ are on the other side.

For example, each of the first four tableaux in Figure~\ref{fig:cycleExs} is followed by its image under $\Psi_k^{(4)}$ where $k$ is the label in the gray cell. The operator $\Psi_k(T)$ from Section~\ref{sec:tabinv} is equal to $\Psi_k^{(n-1)}(T)$. Since $\Psi_k^{(d)}$ is a composition of $\Swap$ operators, the image of a standard Young tableau is also a standard Young tableau by the Lemma.

\begin{figure}[H]
\begin{center}
\includegraphics[width=3.5in]{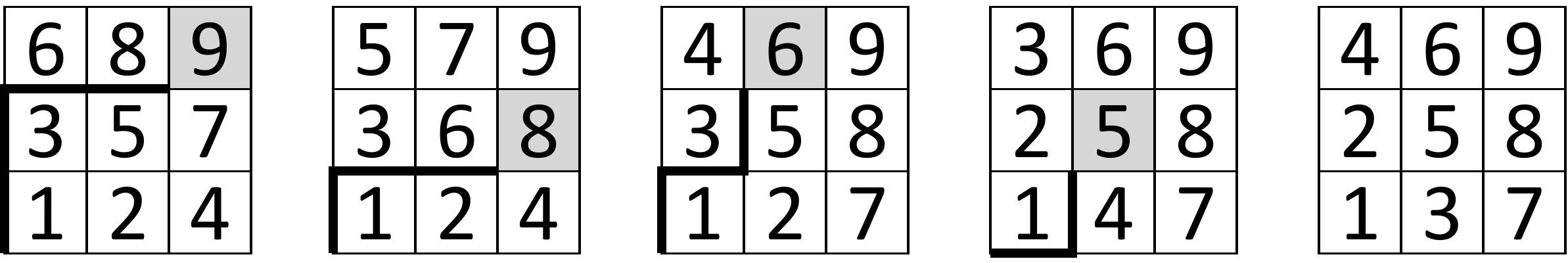}
\caption{The computation of $\maj_4$ for a tableau of shape $(3,3,3)$.}
\label{fig:cycleExs}
\end{center}
\end{figure}

In the remainder of this section, we develop our statistic $\maj_d$ by using the map $\Psi^{(d)} = \Psi_1^{(d)} \circ \dots \circ \Psi_n^{(d)}$ as follows. First, we will see that each map $\Psi_k^{(d)}$ is a bijection on standard Young tableaux of a fixed shape. This guarantees that $\maj_1(\Phi^{(d)}(T))$ is equidistributed with $\maj(T)$ as $T$ ranges over $SYT(\lambda)$. Careful analysis of the maps $\Psi^{(d)}_k$ will then yield an description of $\maj_d(T)$ given by weighted inversions of $T$ which will equal $\maj_1(\Phi^{(d)}(T))$.

We show that each map $\Phi_k^{(d)}$ is bijective by constructing its inverse. We will make use of the inverse $\Phi_k$ of $\Psi_k$ constructed by Haglund and Stevens. Its construction can be found immediately before Theorem 4.5 of \cite{HSinv}. For any standard Young tableau $T$, let $\Phi_k^{(d)}(T)$ be the tableau obtained from $S=\Phi_k(T)$ by cycling the numbers $\{1,\dots,k-d\}$ about the path $\pi(S,k)$. In other words, apply the operators $\Swap_{a,k}$ for $a=1,2,\dots,k-d-1$ to $S$ successively. Clearly the result is also in $SYT(\lambda)$.

\begin{theorem}
Let $S,T \in SYT(\lambda)$ such that $T = \Psi_k^{(d)}(S)$. Then $S = \Phi_k^{(d)}(T)$.
\end{theorem}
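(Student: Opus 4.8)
\noindent\emph{Proof proposal.}

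The plan is to reduce the claim to the Haglund--Stevens fact that $\Psi_k$ is a bijection of the finite set $SYT(\lambda)$ with inverse $\Phi_k$, together with the observation that $\Psi_k$, $\Psi_k^{(d)}$ and $\Phi_k^{(d)}$ are all built from one and the same list of $\Swap$ operators.

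First I would record the bookkeeping. Let $G$ be the operator sending a tableau to the result of applying $\Swap_{1,k},\Swap_{2,k},\dots,\Swap_{k-d-1,k}$ to it in that order (an empty list, so $G=\mathrm{id}$, when $k\le d+1$). By Lemma~\ref{lemma:swap} each $\Swap_{a,k}$ maps $SYT(\lambda)$ to itself, hence so does $G$, and directly from the definition of $\Phi_k^{(d)}$ we have $\Phi_k^{(d)}=G\circ\Phi_k$. On the other hand $\Psi_k$ ($=\Psi_k^{(n-1)}$) sends a tableau to the result of applying $\Swap_{1,k},\Swap_{2,k},\dots,\Swap_{k-2,k}$ in that order. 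Performing the first $k-d-1$ of these operators is exactly $G$, and performing the remaining ones, $\Swap_{\max(k-d,1),k},\dots,\Swap_{k-2,k}$, is exactly $\Psi_k^{(d)}$; hence
$$
\Psi_k=\Psi_k^{(d)}\circ G
$$
as maps on $SYT(\lambda)$. This identity is merely a re-bracketing of one list of operations into two consecutive sublists, so it requires no hypotheses whatsoever --- in particular, no assertion that the underlying paths $\pi(\cdot,k)$ are preserved along the way.

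Now I would run the formal argument. Since $\Psi_k$ is a bijection of the finite set $SYT(\lambda)$ and $\Psi_k=\Psi_k^{(d)}\circ G$, the factor $G$ is injective, hence bijective, so that $\Psi_k^{(d)}=\Psi_k\circ G^{-1}$ is a bijection as well; and $\Phi_k^{(d)}=G\circ\Phi_k$. Therefore
$$
\Phi_k^{(d)}\circ\Psi_k^{(d)}=(G\circ\Phi_k)\circ(\Psi_k\circ G^{-1})=G\circ(\Phi_k\circ\Psi_k)\circ G^{-1}=G\circ G^{-1}=\mathrm{id}_{SYT(\lambda)},
$$
using $\Phi_k\circ\Psi_k=\mathrm{id}$. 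In particular $T=\Psi_k^{(d)}(S)$ forces $S=\Phi_k^{(d)}(T)$, which is the statement; reversing the factors gives $\Psi_k^{(d)}\circ\Phi_k^{(d)}=\mathrm{id}$ too, so $\Phi_k^{(d)}$ is a genuine two-sided inverse and $\Psi_k^{(d)}$ is a bijection of $SYT(\lambda)$ --- exactly the consequence the authors announce for this theorem.

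The main thing to be careful about is the bookkeeping in the second paragraph: one must fix a single convention for the order in which the $\Swap_{a,k}$ are applied, check that this is the convention under which $\Swap_{\max(k-d,1),k}\circ\dots\circ\Swap_{k-2,k}$ realizes the block-cycling description of $\Psi_k^{(d)}$, and handle the degenerate ranges ($k\le d+1$, and $k\le 2$) where $G$ or $\Psi_k^{(d)}$ collapses to the identity. Beyond importing the Haglund--Stevens construction of $\Phi_k$, there is no genuinely hard step. If one wanted a self-contained argument one could instead track, block by block about $\pi(S,k)$, which cell each label occupies after applying $\Psi_k^{(d)}$ and then $\Phi_k^{(d)}$; that works too, but it needs the case analysis for cycling maximal blocks and is far less transparent than the factorization above.
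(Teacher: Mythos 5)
Your proof is correct and follows essentially the same route as the paper's: factor the list of $\Swap$ operators so that $\Psi_k=\Psi_k^{(d)}\circ G$ and $\Phi_k^{(d)}=G\circ\Phi_k$, then invoke the Haglund--Stevens fact that $\Phi_k$ inverts $\Psi_k$. Your write-up is actually more careful than the paper's one-line computation (which has some index/ordering typos), since you fix the application order of the $\Swap_{a,k}$ explicitly and justify the invertibility of $G$ via finiteness.
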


\begin{proof}
If $k-d < 0$, then the theorem is trivial. Suppose $k>d$. By definition, $\Phi_k^{(d)}(T) = \Psi_k^{(d)}( \Phi_k^{(d)}(S) ) = \left( \Swap_{k-2,k} \circ \cdots \circ \Swap_{\max(k-d,1),k} \right) \circ \left( \Swap_{\max(k-d,1),k} \circ \dots \circ \Swap_{1,k} \right) \circ \Phi_k(S) = \Psi_k \circ \Phi_k(S) = S$.
\end{proof}

Now our goal is to define an appropriate statistic $\maj_d$ so that $\maj_d(T) = \maj_1(\Psi^{(d)}(T))$. Let $T_0 = T$ and inductively define $T_{i+1} = \Psi^{(d)}_{n-i+1}(T_i)$. Then $T_n = \Psi^{(d)}(T)$. For example, if $d=4$ then the tableaux in Figure~\ref{fig:cycleExs} are, from left to right, $T_0$, $T_1$, $T_2{=}T_3$, $T_4$ and $T_5{=}\cdots{=}T_9$.

\begin{lemma}
Let $d \leq n$ and $T \in SYT(\lambda)$ for some $\lambda \vdash n$. The label $\max(n-d,1)$ lies under the path $\pi(T,n)$ iff $n-1$ and $n$ form a descent in $\Psi^{(d)}(T)$.
\end{lemma}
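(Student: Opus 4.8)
The plan is to determine precisely what $\Psi^{(d)}$ does to the two largest labels and then reduce the claim to an elementary geometric fact about $\pi(T,n)$. Write $c_n=(r,c)$ for the cell of $n$ in $T$; since $n$ is the largest entry this is an outer corner of $\lambda$, so $c=\lambda_r$, and put $L=\max(n-d,1)$. For every $k<n$ the map $\Psi^{(d)}_k$ only rearranges the labels $\max(k-d,1),\dots,k-1$, all of which are at most $n-2$; and $\Psi^{(d)}_n$, the first map applied in forming $\Psi^{(d)}(T)$, cycles only $L,\dots,n-1$ and hence fixes $n$. Therefore $n$ occupies $c_n$ in $\Psi^{(d)}(T)$, and the cell of $n-1$ in $\Psi^{(d)}(T)$ equals its cell in $\Psi^{(d)}_n(T)$. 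In $\Psi^{(d)}_n(T)$ the label $n-1$ sits in the last block $\{a,a+1,\dots,n-1\}$ of the cycling about $\pi(T,n)$, where $a$ lies on the same side of $\pi(T,n)$ as $L$ (and $a=n-1$ if $n-1$ is itself on that side, for instance whenever $d=1$, where $\Psi^{(d)}_n$ is the identity); cycling carries $n-1$ into the cell $c_a$ that originally held $a$. The point to retain is that $c_a$ lies on the same side of $\pi(T,n)$ as $L$.

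Next I would record two facts about $\pi(T,n)$, both read off its construction in \cite{HSinv} (the path starts at the lower-left corner of $c_n$ and every step goes down or left): (a) no cell in a row $\ge r$ lies under $\pi(T,n)$, because the path never rises above the bottom edge of row $r$; and (b) every cell that lies strictly below row $r$ and in a column $\ge c$ lies under $\pi(T,n)$, because such a cell lies weakly below and weakly to the right of the path's starting corner and hence in the region cut off below and to the right by $\pi(T,n)$. I would also use an elementary tableau fact: in any standard Young tableau with $n$ in $(r,c)$, the entry $n-1$ cannot lie in a cell $(i',j')$ with $i'<r$ and $j'<c$, for then $\lambda_{i'}\ge\lambda_r=c>j'$, so $(i',j'+1)$ is a cell of $\lambda$ whose entry exceeds $n-1$ and is therefore $n$, placing $n$ in row $i'<r$, a contradiction. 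Applying this to the standard Young tableau $\Psi^{(d)}(T)$, in which $n-1$ occupies $c_a$, shows that $c_a$ is never of this forbidden type.

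Now the pieces fit together. The pair $n-1,n$ is a descent in $\Psi^{(d)}(T)$ precisely when $n$ lies strictly above $n-1$ there, i.e.\ precisely when $c_a$ lies in a row $<r$. By the forbidden-cell fact, $c_a$ is either in a row $\ge r$ or in a row $<r$ and a column $\ge c$, so by (a) and (b) the condition ``$c_a$ lies in a row $<r$'' is equivalent to ``$c_a$ lies under $\pi(T,n)$''. Finally $c_a$ and the cell of $L$ lie on the same side of $\pi(T,n)$, so $c_a$ lies under $\pi(T,n)$ if and only if $L=\max(n-d,1)$ does; chaining these equivalences gives the statement.

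I do not anticipate a genuine obstacle, only care in the bookkeeping: correctly identifying the block of $n-1$ in $\Psi^{(d)}_n(T)$ and handling its degenerate forms (notably $L=n-1$, i.e.\ $d=1$, where $c_a=c_{n-1}$), and writing short but honest proofs of (a) and (b) directly from the definition of $\pi(T,n)$. The mildest delicacy is verifying that the cell $c_a$ into which $n-1$ is carried really is the one that held $a$ before the cycling and really does lie on $L$'s side of $\pi(T,n)$; this is immediate from the block description of $\Psi^{(d)}_n$ in the text, but it is the spot where an error would most easily creep in.
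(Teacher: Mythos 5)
Your proof is correct and takes essentially the same route as the paper's: both reduce the lemma to where $\Psi_n^{(d)}$ sends $n-1$, show that it lands on the same side of $\pi(T,n)$ as $\max(n-d,1)$ (you via the block/cycling description, the paper via the invariant that when $\Swap_{a,n}$ is applied the label $a$ sits on that side), and then translate ``under $\pi(T,n)$'' into ``strictly lower row than $n$'' using the same observation that $n-1$ cannot lie strictly below and strictly left of $n$. Your facts (a) and (b) are just a more explicit statement of the geometric step the paper uses in its second case.
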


\begin{proof}
Note that in $\Psi^{(d)} = \Psi_1^{(d)} \circ \dots \circ \Psi_n^{(d)}$, the position of $n$ is fixed and only $\Phi^{(d)}_n$ affects the position of $n-1$.

First suppose that $\max(n-d,1)$ lies under $\pi(T,n)$. Then we apply $\Psi_n^{(d)} = \Swap_{n-2,n} \circ \dots \circ\Swap_{\max(n-d,1),n}$. At the first step, $\max(n-d,1)$ is under the path. Hence after applying $\Swap_{\max(n-d,1),n}$, we will have that $\max(n-d,1)+1$ is under the path. In fact, whenever it is time to apply the operator $\Swap_{a,n}$, we will have that $a$ is under the path. Hence when we apply the last $\Swap$ operator, either both $n-2$ and $n-1$ will be under the path (and stay that way) or $n-1$ will be above the path and switch places with $n-2$. Hence $n-1$ will be below the path $\pi(T,n)$ and therefore in a row which is lower than $n$'s row, forming a descent as desired.

Similarly, if $\max(n-d,1)$ is above the path, then at the step where we apply $\Swap_{a,n}$, we will have that $a$ is above the path. The end result will be that $n-1$ is above the path $\pi(T,n)$. Everything in a lower row than $n$ and weakly to the right must be under $\pi(T,n)$. But $n-1$ cannot be in a row below $n$ and strictly to its left without violating the row and column increasing properties of standard Young tableaux; if it were, there would have to be another label $x$ in the cell at the intersection of $n-1$'s row and $n$'s column for which $n-1 \leq x < n$. Hence $n-1$ cannot be in a lower row than $n$ and therefore they cannot form a descent.
\end{proof}

Therefore, since we would like $\maj_d(T) = \maj_1( \Psi^{(d)}(T) )$, our statistic will have to satisfy the following recursion.
\begin{equation} \label{eq:recur}
\maj_d(T) = \begin{cases} \maj_d(\Phi_n^{(d)}(T)-\{n\})+n-1 & \hbox{if }\max(n-d,1)\hbox{ is under }\pi(T,n)\\
\maj_d(\Phi_n^{(d)}(T)-\{n\}) & \hbox{else} \end{cases}
\end{equation}

It is tempting to simply define inversions following \cite{HSinv} and then apply the same weights to these inversions as we would for permutations: If $j-i>d$, inversion $(i,j)$ gets weight 0. If $j-i=d$, it gets weight $d$. Otherwise it gets weight 1. However, this does not satisfy the recursion above (and is not equidistributed with $\maj_1$).

For example, consider the tableau $T$ on the left side of Figure~\ref{fig:cycleExs}. Following this rule, we would get the following inversions with positive weights: $(5,9)$, $(7,9)$, $(4,8)$, $(5,8)$, $(7,8)$, $(2,5)$, $(1,6)$, $(2,6)$, $(3,6)$, $(4,6)$, $(5,6)$, $(1,3)$, and $(2,3)$ for a total weight of $21$. Compare this with $\Phi_9^{(4)}(T)-\{9\}$. (Note that the second tableau in the figure is $T_1=\Phi_9^{(4)}(T)$.) Here we would get a total weight of $15$ coming from the inversions $(4,8)$, $(4,7)$, $(6,7)$, $(2,6)$, $(4,6)$, $(1,5)$, $(2,5)$, $(3,5)$, $(4,5)$, $(1,3)$, $(2,3)$. This necessitates the more subtle definition found below.

We now define $\maj_d(T)$ as a sum of weighted inversions. Note that if $k \geq d$ then the label $k-d$ will lie in the same cells of $T$ and $T_{n-k}$. Furthermore $1$ is always in the same cell - the lower left corner. Let $x$ be the cell which contains the label $k$ in $T_{n-k}$. Let $\ell$ be the label of cell $x$ in $T$. Then if $x$ lies under the path $\pi(T_{n-k},k)$, the pair $(\max(k-d,1),\ell)$ gets weight $\max(k-d,1)$. Otherwise it gets weight 0. For any $n-k < s < \ell$, the pair $(s,\ell)$ gets weight 1 if $s$ lies under $\pi(T_{n-k},k)$ and 0 otherwise. All pairs $(s,\ell)$ with $s<n-k$ get weight 0 too.

For example, when $T$ is the leftmost tableau in Figure~\ref{fig:cycleExs} and $d=4$, the following inversions pairs will have positive weights: $(5,9)$, $(7,9)$, $(4,7)$, $(2,8)$, $(4,8)$, $(5,8)$, $(7,8)$, $(2,5)$, $(4,5)$, $(1,6)$, $(2,6)$, $(3,6)$, $(4,6)$, $(5,6)$, $(1,3)$, and $(2,3)$. All of these get weight 1 except for $(5,9)$, $(4,7)$, and $(2,8)$, which get weights $5$, $4$, and $2$, respectively. Hence $\maj_4(T) := 24$.

\begin{theorem} \label{thm:equid}
For any $d \leq n$ and $T \in SYT(\lambda)$ for some $\lambda \vdash n$, we have
$$
\maj_d(T) = \begin{cases} \maj_d(\Phi_n^{(d)}(T)-\{n\})+n-1 & \hbox{if }\max(n-d,1)\hbox{ is under }\pi(T,n)\\
\maj_d(\Phi_n^{(d)}(T)-\{n\}) & \hbox{else} \end{cases}
$$
Hence $\maj_d(T) = \maj_1(\Psi^{(d)}(T))$.
\end{theorem}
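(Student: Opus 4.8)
The plan is to prove the displayed recursion for $\maj_d$ directly from the definition as a sum of weighted inversions, and then obtain the final identity $\maj_d(T) = \maj_1(\Psi^{(d)}(T))$ by induction on $n$, matching the recursion against the recursion \eqref{eq:recur} that was shown to characterize $\maj_1(\Psi^{(d)}(T))$. The base case $n=1$ is trivial. For the inductive step, I would separate the weighted inversions of $T$ contributing to $\maj_d(T)$ into two groups: those of the form $(s,\ell_n)$ where $\ell_n$ is the label of the cell containing $n$ in $T_0 = T$ — equivalently, the inversions coming from the $k=n$ step of the construction, since $T_0 = T$ means $x$ is literally the cell of $n$ and $\ell = n$ — and all the remaining inversions, which come from the steps $k = n-1, n-2, \dots$ of the construction applied to $T_1 = \Psi_n^{(d)}(T) = \Phi_n^{(d)}{}^{-1}(\dots)$, wait — more precisely the remaining weighted inversions of $T$ are indexed by cells $x$ containing $k < n$ in $T_{n-k}$, where $T_1 = \Psi_n^{(d)}(T)$.

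The key observation I would establish is that the ``tail'' of the construction for $T$ — the tableaux $T_1, T_2, \dots, T_n$ — coincides, after deleting $n$, with the full construction for the tableau $T' := \Phi_n^{(d)}(T) - \{n\} \in SYT(\lambda - \text{(cell of } n))$. Indeed $T_1 = \Psi_n^{(d)}(T)$ and $\Phi_n^{(d)}(T)$ recovers $T$ by the previous theorem, so $\Phi_n^{(d)}(T) = T$ is wrong; rather $S = \Phi_n^{(d)}(\Psi_n^{(d)}(S))$, so with $U := \Phi_n^{(d)}(T)$ we have $\Psi_n^{(d)}(U) = T$, hence $T_1 = \Psi_n^{(d)}(T)$ need not equal $U$ — the cleaner route is: the recursion is stated in terms of $\Phi_n^{(d)}(T) - \{n\}$, and I want to show the weighted inversions $(s,\ell)$ of $T$ with $\ell \neq \ell_n$ are in weight-preserving bijection with the weighted inversions of $\Phi_n^{(d)}(T) - \{n\}$. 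For this I would check that for $k < n$, the cell containing $k$ in the $(n-k)$-th tableau of $T$'s construction is the same as in the $((n-k)-1)$-th tableau of $T'$'s construction, using that removing the $\Psi_n^{(d)}$ step and then deleting $n$ commutes with all later $\Psi_k^{(d)}$ steps (each of which only touches labels $\le k-1 < n$), and that $\pi(\cdot, k)$ is unaffected by the presence or position of $n$ when $n$ sits weakly above-and-right along the boundary — this is essentially the same stability argument used in Lemma~\ref{lemma:swap} and in the Haglund–Stevens setup. Once that bijection is in hand, the total weight of these inversions equals $\maj_d(\Phi_n^{(d)}(T) - \{n\})$.

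It then remains to compute the contribution of the inversions $(s, \ell_n)$, i.e.\ the $k=n$ term. By the definition, with $T_0 = T$ the cell $x$ is the cell of $n$, $\ell = n$, and $n - k = 0$, so: the pair $(\max(n-d,1), n)$ gets weight $\max(n-d,1)$ if the cell of $n$ lies under $\pi(T,n)$ — but the cell of $n$ is the \emph{start} of the path $\pi(T,n)$, so I must be careful about the convention of ``under'' for the starting cell; I expect the intended reading makes this contribution vanish or be absorbed, and the genuinely varying contribution comes from the pairs $(s,n)$ with $1 \le s < n$, each weighted $1$ if $s$ lies under $\pi(T,n)$ and $0$ otherwise — wait, but then the weight would always be the number of cells under the path, not $0$ or $n-1$. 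The resolution, which is the crux I would pin down, is that the $k = n$ contribution must be re-examined: actually the weighted inversions with second coordinate $\ell_n$ as enumerated in the running $d=4$ example are $(5,9), (7,9)$ with weights $5, 1$ — so the $k=n$ block contributes a variable amount, and it is \emph{the difference} between $\maj_d(T)$ and $\maj_d(\Phi_n^{(d)}(T)-\{n\})$ as computed through the bijection of the previous paragraph that must equal $n-1$ or $0$. So the real content is: (total weight of $k=n$ inversions of $T$) plus (correction from the fact that $T$'s tail construction uses $T_1 = \Psi_n^{(d)}(T)$ whereas $T'$'s construction is the tail of $U = \Phi_n^{(d)}(T)$, and $\Psi_n^{(d)}(U) = T$ but $\Psi_n^{(d)}(T) \neq U$ in general) equals $n-1$ exactly when $\max(n-d,1)$ is under $\pi(T,n)$.

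I would organize the final computation as follows: write $\maj_d(T) = A + B$ where $A$ is the weight of $k=n$ inversions and $B$ is the weight of the rest; and $\maj_d(\Phi_n^{(d)}(T) - \{n\}) = B'$, the weight of \emph{its} inversions; then show $A + B - B' \in \{0, n-1\}$ with the split governed by whether $\max(n-d,1)$ lies under $\pi(T,n)$, invoking the preceding Lemma (which already tells us this same condition controls whether $n-1,n$ is a descent in $\Psi^{(d)}(T)$, hence whether $\maj_1(\Psi^{(d)}(T))$ picks up the $n-1$). The main obstacle — and the step I would spend the most care on — is the bookkeeping that relates $B$ (inversions from $T$'s tail, built on $\Psi_n^{(d)}(T)$) to $B'$ (inversions of $\Phi_n^{(d)}(T) - \{n\}$): these involve two \emph{different} tableaux and I must track exactly which cells change labels under the partial cycling in $\Psi_n^{(d)}$ versus $\Phi_n^{(d)}$, using that $\Psi_n^{(d)}$ only cycles labels in $[\max(n-d,1), n-1]$ while $\Phi_n^{(d)}$ cycles labels in $[1, n-d]$, so their composition is the full Haglund–Stevens $\Psi_n$, and the labels $\le n-d$ are common fixed data across $T$ and $T_{n-k}$ as already noted in the text. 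Carefully exploiting this decomposition of $\Psi_n$ into a ``low'' part and a ``high'' part is what makes the weights in the definition of $\maj_d$ — the single special weight $\max(k-d,1)$ attached to the pair involving $\max(k-d,1)$, versus the unit weights for intermediate pairs — fall out exactly, and once that is verified the induction closes and the equidistribution with $\maj_1$ (and hence, via Theorem~\ref{thm:permequidist}'s tableau analog and $\maj_1 = \maj$, with $\inv$) follows immediately.
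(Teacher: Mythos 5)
Your overall architecture does match the paper's: split $\maj_d(T)$ into the contribution $A$ of the pairs $(s,n)$ coming from the $k=n$ step plus the contribution $B$ of everything else, compare $B$ with $B'=\maj_d(\Phi_n^{(d)}(T)-\{n\})$, and use the preceding Lemma to tie the condition ``$\max(n-d,1)$ under $\pi(T,n)$'' to the descent at $n-1$ in $\Psi^{(d)}(T)$, so that the recursion for $\maj_d$ matches the recursion \eqref{eq:recur} for $\maj_1(\Psi^{(d)}(T))$. But there is a genuine gap: the computation you yourself flag as ``the crux I would pin down'' and ``the main obstacle'' --- determining exactly how the weights of the pairs not involving $n$ differ between the two tableaux --- is never carried out, and that computation \emph{is} the proof. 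The paper closes it with three observations you do not make: (i) the attacking relation (which pairs are eligible to carry weight) is determined by the paths in the later tableaux $T_1,T_2,\dots$ and is therefore the same on both sides; (ii) the special pairs $(\max(k-d,1),\ell)$ for $k<n$ keep their weight because the cell of the label $\max(k-d,1)$ is not moved; (iii) a pair can flip between weight $0$ and weight $1$ only if both of its labels lie in a single cyclic block $\{a,a+1,\dots,a+m\}$ of the $n$-th cycling step, and within such a block the cycling destroys (resp.\ creates) exactly one unit inversion for each of $a+1,\dots,a+m$. Summing (iii) over blocks shows the net change $B-B'$ equals $\pm\#\{s:\max(n-d,1)<s<n,\ s\hbox{ on the side of }\pi(T,n)\hbox{ opposite }\max(n-d,1)\}$, which added to $A$ (namely $\max(n-d,1)+\#\{s\hbox{ under the path}\}$ in the first case, $\#\{s\hbox{ under the path}\}$ in the second) gives exactly $n-1$, respectively $0$. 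Without this block-level accounting, your claim that $A+B-B'\in\{0,n-1\}$ with the stated dichotomy is an assertion, not an argument.

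Two smaller points. First, you briefly propose a weight-preserving bijection between the non-$n$ inversions of the two tableaux and then retract it; the retraction is correct --- the correspondence is emphatically not weight-preserving, and controlling the defect is the whole content --- but the retraction leaves you with nothing in its place. Second, your worry about whether the starting cell of $\pi(T,n)$ counts as ``under'' the path is a misreading: as the running example (the pair $(5,9)$ receiving weight $5$) makes clear, the condition governing the weight-$\max(k-d,1)$ pair is whether the \emph{label} $\max(k-d,1)$ lies under $\pi(T_{n-k},k)$, which is also the only reading consistent with the case split in the statement being proved. Finally, note that invoking the preceding Lemma only establishes that $\maj_1(\Psi^{(d)}(T))$ obeys the displayed recursion; it gives you nothing about the weighted-inversion sum $\maj_d(T)$ itself, so it cannot be used to shortcut the missing count.
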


\begin{cor}
For any $n$ and any $\lambda \vdash n$, the statistics $\{\maj_d : 1 \leq d \leq n\}$ are equidistributed on $SYT(\lambda)$.
\end{cor}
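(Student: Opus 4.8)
The plan is to obtain the Corollary as an immediate consequence of Theorem~\ref{thm:equid} together with the bijectivity of the maps $\Psi^{(d)}_k$; essentially all the substance is already in hand, and only the assembly remains.

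First I would record that $\Psi^{(d)} = \Psi_1^{(d)} \circ \dots \circ \Psi_n^{(d)}$ is a bijection of $SYT(\lambda)$ onto itself. By Lemma~\ref{lemma:swap}, each $\Swap_{a,k}$ — and hence each $\Psi_k^{(d)}$, being a composite of such operators — sends $SYT(\lambda)$ into $SYT(\lambda)$. The theorem preceding Theorem~\ref{thm:equid} exhibits $\Phi_k^{(d)}$ as an inverse of $\Psi_k^{(d)}$ on $SYT(\lambda)$; since $SYT(\lambda)$ is finite, this (even in its one-sided form, $\Phi_k^{(d)}\circ\Psi_k^{(d)}=\mathrm{id}$, which already forces injectivity) makes each $\Psi_k^{(d)}$ a bijection of $SYT(\lambda)$. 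A composite of bijections of $SYT(\lambda)$ is again a bijection of $SYT(\lambda)$, so $\Psi^{(d)}$ is a bijection of $SYT(\lambda)$.

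Next I would invoke Theorem~\ref{thm:equid}, which gives $\maj_d(T) = \maj_1(\Psi^{(d)}(T))$ for every $T \in SYT(\lambda)$. Summing over $T$ and reindexing by $T' = \Psi^{(d)}(T)$ yields
\begin{equation*}
\sum_{T \in SYT(\lambda)} q^{\maj_d(T)} \;=\; \sum_{T \in SYT(\lambda)} q^{\maj_1(\Psi^{(d)}(T))} \;=\; \sum_{T' \in SYT(\lambda)} q^{\maj_1(T')},
\end{equation*}
the last step being valid precisely because $\Psi^{(d)}$ permutes $SYT(\lambda)$. The right-hand side does not depend on $d$, so $\sum_T q^{\maj_d(T)}$ is the same polynomial for every $d$ with $1 \le d \le n$ (indeed it equals $\sum_T q^{\maj_1(T)}$, and hence $\sum_T q^{\inv(T)}$ by the results of \cite{HSinv}); this is exactly the asserted equidistribution.

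There is no genuine obstacle remaining at this point: the hard work — the explicit inverse $\Phi_k^{(d)}$ of $\Psi_k^{(d)}$ and the recursion of Theorem~\ref{thm:equid} — has already been carried out. The only points deserving a word of care are that a one-sided inverse on the finite set $SYT(\lambda)$ already suffices for bijectivity, and that each $\Psi_k^{(d)}$ genuinely preserves the shape $\lambda$, which is exactly what Lemma~\ref{lemma:swap} guarantees.
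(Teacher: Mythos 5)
Your proposal is correct and is exactly the argument the paper intends: Theorem~\ref{thm:equid} gives $\maj_d(T)=\maj_1(\Psi^{(d)}(T))$, and the bijectivity of $\Psi^{(d)}$ (via the explicit inverse $\Phi_k^{(d)}$, with your correct remark that injectivity on a finite set suffices) yields the equidistribution by reindexing the sum. No differences from the paper's route worth noting.
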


\begin{proof}[Proof of Theorem~\ref{thm:equid}]
Suppose that $k<n$ and we have an inversion pair $(s,b)$ in $T$ so that $s=\max(d-k,1)$ and it $t$ lies in the same cell of $T$ as $k$ does in $T_{n-k}$. Since $\Phi_n^{(d)}$ does not affect the label $s$, we will assign the same weight, $s$, to this inversion for both $\maj_d(T)$ and $\maj_d(\Phi_n^{(d)}(T)-\{n\})$. 

First consider the case in which $m:=\max(n-d,1)$ is under $\pi(T,n)$. Then all of the inversions $(s,n)$ for which $m\leq s<n$ and $s$ is under $\pi(T,n)$ will be lost when we induct. If we can show that we lose an additional inversion for each $m<s<n$ when $s$ is above the path - and nothing else - then we will have the desired equality. But indeed the sets of cells attacking one another do not change here: they are defined in terms of paths in later tableaux $T_1, T_2, \dots$. And the pairs with weight greater than one also do not change, as we have already observed. So we only need to see when a pair with weight 1 becomes a pair with weight 0 or vice versa. When we apply $\Phi_n^{(d)}$, such changes can only occur within a single block $\{a,a+1,\dots,a+m\}$ where $a$ is separated from the $a+i$'s by $\pi(T,n)$. The map will send $a$ to $a+m$ and lower each of the other labels by 1. This means that any inversions between $a$ and the $a+i$'s will be broken and no new inversions will be created. Hence we will lose one additional weight for each cell above $\pi(T,n)$ whose label $s$ satisfies $m<s<n$.

On the other hand, if $m$ is above $\pi(T,n)$, then we need to see that the weight coming from inversions previously made with $n$ will be replaced by something else. Since $(m,n)$ has weight 0, this all comes from inversions $(s,n)$ with $m<s<n$ and weight $1$. That is, this is the number of cells below $\pi(T,n)$ whose label $s$ satisfies $m<s<n$. When we apply $\Phi_n^{(d)}$, we may change a weight of 0 to 1 or 1 to 0, as before. Again, such a change can only happen within a single block $\{a,a+1,\dots,a+m\}$. Now $a$ is below $a+1,\dots,a+m$ and cycling will replace $a$ by $a+m$ and reduce all the other labels. Hence the cell originally containing $a$ will make an inversion with each of the other cells. This means we get one additional inversion for each label $m<s<n$ where $s$ lies above $\pi(T,n)$ as desired.
\end{proof}


\nocite{*}
\bibliographystyle{alpha}

\end{document}